\newtheorem{theorem}{Theorem}
\newtheorem{lemma}{Lemma}
\theoremstyle{definition}
\newtheorem{definition}{Definition}
\theoremstyle{remark}
\newtheorem{remark}{Remark}
\theoremstyle{definition}
\theoremstyle{definition}
\newcommand{\newsec}[1]{
\vspace{0.2cm} 
\noindent \textbf{#1}
}
\newcommand{\R}{\mathbb{R}}
\renewcommand{\S}{\mathcal{S}}
\newcommand{\he}{h_e}
\newcommand{\Se}{\S_e}
\newcommand{\Sx}{\S_x}
\newcommand{\Su}{\S_u}
\newcommand{\pe}{p_e}
\newcommand{\de}{d_e}
\begin{document}

\theoremstyle{definition}
\theoremstyle{remark}
\theoremstyle{definition}
\theoremstyle{definition}

\title{\LARGE \bf Integral Control Barrier Functions\\for Dynamically Defined Control Laws}
\author{Aaron D. Ames$^1$, Gennaro Notomista$^2$, Yorai Wardi$^3$, and Magnus Egerstedt$^3$
\thanks{\textcopyright 2020 IEEE. Personal use of this material is permitted. Permission from IEEE must be obtained for all other uses, in any current or future media, including reprinting/republishing this material for advertising or promotional purposes, creating new collective works, for resale or redistribution to servers or lists, or reuse of any copyrighted component of this work in other works.}
\thanks{$^1$Mechanical and Civil Engineering, Control and Dynamical Systems, Caltech, Pasadena, CA 91125.
        {\tt\small ames@caltech.edu}}
\thanks{$^2$George W. Woodruff School of Mechanical Engineering, Georgia Tech, Atlanta, GA 30332. {\tt\small g.notomista@gatech.edu}}
\thanks{$^3$School of Electrical and Computer Engineering, Georgia Tech, Atlanta, GA 30332. {\tt\small \{ywardi,magnus\}@ece.gatech.edu}} 
}
\maketitle
\thispagestyle{empty}

\begin{abstract}
This paper introduces integral control barrier functions (I-CBFs) as a means to enable the safety-critical integral control of nonlinear systems. Importantly, I-CBFs allow for the holistic encoding of both state constraints and input bounds in a single framework.  We demonstrate this by applying them to a dynamically defined tracking controller, thereby enforcing safety in state and input through a minimally invasive I-CBF controller framed as a quadratic program.
\end{abstract}

\begin{IEEEkeywords}
Constrained control, Output regulation, Control system architecture
\end{IEEEkeywords}

\section{Introduction}

Control Barrier Functions (CBFs) have proven to be effective at enforcing safety in nonlinear systems. The goal of this paper is to introduce a new form of CBFs that are suitable for feedback systems where the controller is defined  by a differential equation \cite{Isidori90}, i.e., integral control \cite{freeman1995robust,khalil2000universal,jiang2001robust}. 
The motivation comes from a dynamically defined tracking controller that has displayed effective tracking convergence \cite{Wardi19}, but also can exhibit large overshoots in the input controls at early transient phases. We investigate a CBF-based approach to limit these overshoots, i.e., satisfy input bounds, while also satisfying safety constraints on the state.  The results are presented in a more-general setting of pointwise constraints on input and state encoded by a novel form of CBF.

The general CBF framework (see \cite{Ames19} and references therein) requires solving an optimization problem at each time to enforce the CBF condition. If the dynamics of the state are control affine, this optimization problem is a convex quadratic program (QP), solvable in real time. Yet this real-time solvability hinges on the control affine nature of the dynamics. Moreover, when input constraints are also present, adding them to the QP can result in its infeasibility.  This paper will address both of these existing limitations in the context of integral control.

This paper introduces a new class of CBFs: \emph{integral control barrier functions (I-CBFs)}.  These barrier functions are developed in the context of systems with dynamically defined controllers, i.e., systems where the evolution of the state and input are described by an ordinary differential equation.  As a result, I-CBFs are defined on both the state and input, allowing for the inclusion of the input in the safety conditions encoded by this function. A related formulation is presented in \cite{huang2019guaranteed}, where control-dependent CBFs are defined: these, unlike I-CBFs, are considered for inputs with bounded time-derivative and, importantly, the integration with nominal tracking controllers was not considered. In this paper, given CBFs and I-CBFs, we present a resulting controller that guarantees safety in state and input, while minimally modifying a nominal dynamically defined controller. Additionally, we explicitly explore systems with both state and input constraints, and demonstrate how these can be unified through the I-CBF framework.

\section{Background Material}
\label{sec:background}

This section summarizes established results regarding safety-critical control via control barrier functions (CBFs), and the tracking-control technique that will inspire the main result of this paper. 
Herein, we consider a nonlinear control system defined by the differential equation:
\begin{equation}
\label{eqn:controlsys}
    \dot{x}(t)=f(x(t),u(t)),
\end{equation}
where $t\geq 0$, the state is $x(t)\in \R^n$, the input is  $u(t)\in \R^m$, and an initial state $x_{0}:=x(0)\in \R^n$ is given.
Assume that the function $f:\R^n\times \R^m\rightarrow \R^n$ is continuously differentiable,  
and it
satisfies sufficient conditions for the existence of a unique solution to \eqref{eqn:controlsys} over all $t\geq 0$ for every bounded, piecewise-continuous control $u(t)$ and $x_{0}\in \R^n$.

\subsection{Control Barrier Functions}
\label{ssec:cbfs}

Control Barrier Functions (CBFs) ensure the forward invariance of a set $\S\subset\R^n$, in which case the system is considered \emph{safe}. The CBF framework was introduced in \cite{Ames14,Ames17} where it was applied to adaptive cruise control, and has since been applied to a variety of application domains, including: automotive safety \cite{jankovic2018robust,xu2015robustness}, robotics \cite{nguyen2016exponential,wang2016multi} and multi-robot systems \cite{Wang17,lindemann2018control}.
See \cite{Ames19} for a recent survey. 
Note that alternative approaches to CBFs include reference governors \cite{gilbert2002nonlinear,nicotra2018explicit} and integral barrier Lyapunov functions \cite{tee2012control}; yet, the former is computationally more expensive, whereas the latter only applies to systems in strict-feedback form and does not explicitly consider input bounds.

Let $h:\R^n\rightarrow \R$ be a continuously-differentiable function 
such that $0$ is a regular value.  We will consider the set, $\S\subset\R^n$, given as the 0-superlevel set of $h$, i.e., $\S$ is defined by:
\begin{eqnarray}
\S & := & \{ x \in \R^n ~ : ~ h(x) \geq 0 \} , \nonumber\\
\partial \S & = & \{ x \in \R^n ~ : ~ h(x) = 0 \}, \nonumber\\
\mathrm{Int}(\S) & = & \{ x \in \R^n ~ : ~ h(x) > 0 \}. \nonumber
\end{eqnarray}
Consider a feedback control law applied to \eqref{eqn:controlsys} of the form:
\begin{equation}
\label{eqn:feedbackcont}
    u(t)=k(x(t)),
\end{equation}
with $k:\R^n\rightarrow \R^m$ continuous.  Under this controller, we say that $x(t)$ is {\it safe} if and only if
$x_0 \in \S$ implies that $x(t)\in \S$ for all $t \geq 0$, i.e., $\S$ is \emph{forward invariant}, i.e., $\S$ is safe.  

The function $h$ is a {\it control barrier function (CBF)} if  there exists a continuous extended class-${\cal K}$ function $\gamma:\R\rightarrow \R$ (monotone increasing, $\gamma(0)=0$) together with a feedback controller \eqref{eqn:feedbackcont} such that along every trajectory of the closed-loop system,  the following CBF condition holds \cite{Ames17}: 
\begin{equation}
\label{eqn:CBFcond}
    \frac{d}{dt}h(x(t))+\gamma(h(x(t)))\geq 0.
\end{equation}
Additionally, if Eq. \eqref{eqn:CBFcond} is satisfied for all $t\geq 0$, then the set $\S$ is forward invariant and asymptotically stable \cite{Ames17}.
Now by \eqref{eqn:controlsys}, it follows that $\dot{h}$ has the following form:
\begin{equation}
\label{eqn:cbfcondcontrol}
    \dot{h}(x(t),u(t)) =\frac{d}{dt}h(x(t))=\frac{\partial h}{\partial x}(x(t))f(x(t),u(t)).
\end{equation}
Therefore, the closed-loop system is safe if for every $t\geq 0$,
\begin{equation}
    \frac{\partial h}{\partial x}(x(t))f(x(t),u(t))+\gamma(h(x(t)))\geq 0.
\end{equation}
If this inequality is not satisfied for a particular $t\geq 0$, then the control law can be modified as follows to guarantee the safety of $\S$. Given $x\in \R^n$, define the set
$K_{x}\subset \R^m$ by:
\begin{align}
K_{x}= & \left\{u\in \R^m :  \frac{\partial h}{\partial x}(x)f(x,u)+\gamma(h(x))\geq 0\large \right\}.
\end{align}
Therefore, $u(t)$ given by: 
\begin{equation}
\label{eqn:optimizationbasedcontrol}
    u(t)\in \underset{u \in \R^{m}}{\operatorname{argmin}} \{||u-k(x(t))||^2~:~u\in K_{x(t)}\}
\end{equation}
ensures the safety of the closed-loop  system while modifying the control law defined in \eqref{eqn:feedbackcont} in a minimally invasive fashion.

Finally, note that if the dynamics of the plant are control-affine, namely
$f(x,u)=f_{0}(x)+f_{1}(x)u$ for  functions $f_{0}:\R^n\rightarrow \R^n$
and $f_{1}:\R^n\rightarrow \R^{n\times m}$, then $u(t)$, defined by \eqref{eqn:optimizationbasedcontrol},  can be computed by a quadratic program (QP).  In particular, we obtain an example of the feedback controller $u = k_{\mathrm{s}}(x)$ that ``filters'' the controller in \eqref{eqn:feedbackcont} and renders the system safe: 
\begin{align}
\label{eqn:CBFQP}
k_{\mathrm{s}}(x) = \underset{u \in \R^{m}}{\operatorname{argmin}} & ~ ||u-k(x)||^2 \\
\mathrm{s.t.} &  ~ \frac{\partial h}{\partial x}(x) f_0(x) + 
\frac{\partial h}{\partial x}(x) f_1(x) u \geq - \gamma(h(x)), \nonumber
\end{align}
where we suppressed the dependence on $t$.
This QP admits a real-time implementation (see \cite{Ames14}).
Importantly, this paper will allow for quadratic program representations of safety-critical CBF-based controllers even when the plant is not control affine.

\subsection{Tracking Control by a Newton-Raphson Flow}
In this paper, we consider a tracking-control technique based on a flow version of the Newton-Raphson method for solving algebraic equations \cite{Wardi19}. This provides us with a convenient expression of a feedback control law defined by an ordinary differential equation, as it is recalled in this section (a more-detailed discussions and analyses can be found in \cite{Wardi19}). It is worth noting, however, that the main results presented in this paper do not depend on this specific method employed.  We consider a specific method to define an integral control law that achieves tracking for definiteness.

To explain the main idea behind the tracking controller utilized, consider first the simple case where the plant is a memoryless nonlinearity, input-output system of the form $y(t)=g(u(t))$,
where $u(t)\in \R^m$ is the input, $y(t)\in \R^m$ is the output, and $g:\R^m\rightarrow \R^m$ is a continuously-differentiable function.
Given a continuously-differentiable reference signal $r(t)\in \R^m$, 
suppose that the objective is to design a control law such that $y(t)$ converges to $r(t)$ in a suitable sense as described below.  Consider the dynamically defined controller:
\begin{equation}
\label{eqn:udotnomem}
    \dot{u}(t)=\Big(\frac{\partial g}{\partial u}(u(t))\Big)^{-1}\big(r(t)-y(t)\big).
\end{equation}
This controller essentially implements a Newton-Raphson (NR) flow whose vector field at time $t$, $\dot{u}(t)$, is the direction defined by the classical NR method for
solving the  time-dependent algebraic equation: $r(t)-g(u)=0$. 

Next, suppose that the plant is dynamic, i.e., defined by the state equation \eqref{eqn:controlsys}, and consider the output equation $y(t)=\zeta(x(t))$,
where the function $\zeta:\R^n\rightarrow \R^m$ is continuously differentiable. Now $x(t)$, and hence $y(t)$, are functions of the initial condition $x_0$ and past control actions: $u(\tau),~\tau\in[0,t)$.  Yet, instantaneously, $y(t)$ is not a function of $u(t)$.
Therefore the controller $u(t)$ cannot be defined by an equation like \eqref{eqn:udotnomem}.  
However, given $t\geq 0$ and $T> 0$, $x(t+T)$ and hence $y(t+T)$ depend on $u(\tau),~\tau\in[t,t+T]$.  This observation forms the basis for our tracking controller. 

Given a time $t \geq 0$ and $T > 0$, fixing $u_t := u(t)$ as a constant over $[t,t + T]$ and (approximately) forward integrating \eqref{eqn:controlsys}, i.e., $\dot{\hat{x}}(\tau) = f(\hat{x}(\tau),u_t)$, over this interval with initial condition $\hat{x}(t)= x(t)$ results in prediction of the state $\hat{x}(t + T)$ and hence a prediction of the output: 
\begin{equation}
\label{eqn:predictor}
    \hat{y}(t+T)= \zeta(\hat{x}(t + T)) =: g(x(t),u(t))
\end{equation}
that, therefore, depends on $x(t)$ and $u(t)$.   
In this case, a suitable extension of \eqref{eqn:udotnomem} has the following form:
\begin{equation}
\label{eqn:controleqn}
    \dot{u}(t)=\alpha\Big(\frac{\partial g}{\partial u}(x(t),u(t)\Big)^{-1}\big(r(t)-\hat{y}(t+T)\big);
\end{equation}
where $\alpha > 0$ is a controller gain. 
In \cite{Wardi19}, the following convergence result was established: 
\begin{equation}
\underset{t\rightarrow\infty}{\operatorname{limsup}}
||r(t)-y(t)||<\eta_{1}+\eta_{2}/\alpha,
\end{equation}
where $\eta_{1}:={\rm limsup}||y(t)-\hat{y}(t)||$, and $\eta_{2}:={\rm limsup}||\dot{r}(t)||$, where limsup means as $t\rightarrow\infty$. Thus, increasing the controller gain, $\alpha$, can reduce the error due variations in $r(t)$, but cannot attenuate the effects of prediction errors.

\section{Integral Control Barrier Functions}
\label{sec:mainresult}
  
This section presents the main construction and results of the paper.  Specifically, we introduce integral control barrier functions, and demonstrate that they can enforce safety for dynamically defined control laws.  This, in essence, creates a new paradigm for safety-critical integral control.  

Consider a dynamical system defined by Eq. \eqref{eqn:controlsys} with an initial condition $x_{0}:=x(0)\in \R^n$. In a departure from the type of controller considered in Section II.A, which is algebraic, we define a general feedback law by the ordinary differential equation:
\begin{equation}
\label{eqn:dyncontrolequation}
  \dot{u}(t)=\phi(x(t),u(t),t),
\end{equation}
with an initial condition $u_{0}:=u(0)\in \R^m$.
We assume that the  function
$\phi:\R^n\times \R^m\times \R\rightarrow \R^m$ is  continuously differentiable. The closed-loop system is defined by
the state equation \eqref{eqn:controlsys} and the control equation \eqref{eqn:dyncontrolequation}.
We  write these two equations jointly in the following way: 
\begin{equation}
\label{eqn:stateeqn}
  \left[\begin{array}{l}
  \dot{x}(t)\\
  \dot{u}(t)
  \end{array}
  \right]~=~
  \left[
  \begin{array}{l}
  f(x(t),u(t))\\
  \phi(x(t),u(t),t)
  \end{array}
  \right].
\end{equation}
Define $z(t):=(x(t)^{\top},u(t)^{\top})^{\top}\in \R^n\times \R^m$ with $z(t)$ the {\it augmented state}, which can be viewed as the state of the closed-loop system.  Note that the state equation \eqref{eqn:stateeqn} has no external input.
  
 Let $\S\subset \R^n\times \R^m$ be a set, the {\it safety set}, defined as the 0-superlevel set of a continuously differentiable function $h:\R^{n}\times \R^m\rightarrow \R$ with $0$ a regular value. In addition, suppose the existence of a continuous extended class-${\cal K}$ function $\gamma:\R\rightarrow \R$ such that the following is satisfied along every trajectory $z(t)$ of the closed-loop system:
 \begin{equation}
 \label{eqn:zCBFcond}
     \frac{d}{dt}h(z(t))+\gamma(h(z(t)))\geq 0.
 \end{equation}
Then, the set $\S$ is forward invariant and asymptotically stable. 
 Throughout the remainder of this paper, for the sake of simplicity of exposition, we will suppress the dependence on time---this can easily be inferred from context. 
 
 For notational simplicity, define the following vector valued $p(x,u) \in \R^m$ and scalar valued $d(x,u) \in \R$ functions:  
  For given $t\geq 0$, define 
  the scalar $d $ and vector $p \in \R^m$ by:
 \begin{align}
 \label{eqn:pxu}
      p(x,u)  :=  &  \Big(\frac{\partial h}{\partial u}(x ,u )\Big)^{\top} \\
      \label{eqn:pdxut}
     d(x,u,t)   := &  -\Big(\frac{\partial h}{\partial x}(x ,u )f(x ,u )  +\frac{\partial h}{\partial u}(x ,u )\phi(x ,u ,t) \nonumber\\
     & \qquad \qquad  +\gamma(h(x ,u ))\Big),
 \end{align}
Note that Eq. \eqref{eqn:zCBFcond} can thus be recast as $d(x,u,t) \leq 0$. 
  
\newsec{Main Result.}
Traditional CBF methods cannot be directly applied to systems of the form given in \eqref{eqn:stateeqn} due to the dynamically defined control law, i.e., the control law $u$ is a result of integrating the augmented state equation in \eqref{eqn:stateeqn}.  
Our approach is to add an auxiliary input to the state equation so that, if the inequality in \eqref{eqn:zCBFcond} is not satisfied for a particular $t\geq 0$, we are able to determine the minimal modification of the dynamically defined control law that will guarantee safety. To this end, we modify the augmented-state equation
\eqref{eqn:stateeqn} to include the input $v\in \R^m$ as follows:
\begin{equation}
\label{eqn:stateeqnv}
  \left[\begin{array}{l}
  \dot{x}\\
  \dot{u}
  \end{array}
  \right]~=~
  \left[
  \begin{array}{l}
  f(x,u)\\
  \phi(x ,u ,t)+v 
  \end{array}
  \right].
\end{equation}

For every $z:=(x^{\top},u^{\top})^{\top}\in \R^n\times \R^m$
and $t\geq 0$, define:
\begin{align}
\label{eqn:Szset}
  K_{z,t}:= & \big\{v\in \R^m~
  :~\frac{\partial h}{\partial x}(x,u)f(x,u)    \\
   &+\frac{\partial h}{\partial u}(x,u)\big(\phi(x,u,t)+v\big) ~ +\gamma(h(x,u))~\geq~0\big\} \nonumber\\
  \label{eqn:Szsetshort} 
   =& \left\{ v\in \R^m~
  :~ p(x,u)^{\top} v \geq d(x,u,t) \right\}.
\end{align}
Therefore, if $v(x,t) \in K_{z,t}$, it follows that Eq. \eqref{eqn:zCBFcond} is satisfied.  Additionally, if the dynamic control law $\dot{u} = \phi(x,u,t)$ is inherently safe, then $d(x,u,t) \leq 0$ and $v = 0$ imply that the system is safe.
This leads to the formulation of I-CBFs,  
mirroring the classic definition of control Lyapunov functions \cite{sontag1989universal}) and similar in spirit to control-dependent CBFs \cite{huang2019guaranteed}.

\begin{definition}
For the system \eqref{eqn:stateeqnv}, with corresponding safe set $\S \subset \R^n \times \R^m$ defined as the 0-superlevel set of a function $h : \R^n \times \R^m \to \R$ with $0$ a regular value: $\S = \{(x,u) \in \R^n \times \R^m ~ : ~ h(x,u) \geq 0\}$.  Then $h$ is an \emph{integral control barrier function (I-CBF)} if for any $(x,u) \in \R^n \times \R^m$ and $t \geq 0$:
\begin{eqnarray}
\label{eqn:relativedegree}
p(x,u) = 0 \qquad \Rightarrow \qquad d(x,u,t) \leq 0.
\end{eqnarray}
\end{definition}
  
We now have the necessary constructions to state the main result of this paper. 
 
\begin{theorem}
\label{thm:main} 
 Consider the control system $\dot{x} = f(x,u)$, with $x \in \R^n$ and $u \in \R^m$, and suppose that there is a corresponding dynamically defined controller: $\dot{u} = \phi(x,u,t)$.  If the safe set $\S \subset \R^n \times \R^m$ is defined by an integral control barrier function, $h : \R^n \times \R^m \to \R$,
 then modifying the dynamically defined controller to be of the form: 
 \begin{eqnarray}
 \label{eqn:umodifiedopt}
 \dot{u}  =  \phi(x,u,t) + v^*(x,u,t)
 \end{eqnarray}
 with $v^*$ the solution to the QP: 
 \begin{align}
\label{eqn:CBFQPv}
 \qquad \qquad \quad  v^*(x,u,t)  =  \underset{v \in \R^{m}}{\operatorname{argmin}} & ~ ||v||^2 \\
\mathrm{s.t.} &  ~ p(x,u)^{\top} v \geq d(x,u,t) \nonumber
\end{align}
 results in safety, i.e., the control system $\dot{x} = f(x,u)$ with the dynamically defined controller \eqref{eqn:umodifiedopt} results in $\S$ being forward invariant: if $(x(0),u(0)) \in \S$ then $(x(t),u(t)) \in \S$ for all $t \geq 0$.
\end{theorem}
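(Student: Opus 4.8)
The plan is to reduce the claim to the forward-invariance criterion already recorded immediately after \eqref{eqn:zCBFcond}: if the CBF condition $\frac{d}{dt}h(z)+\gamma(h(z))\geq 0$ holds along every closed-loop trajectory, then $\S$ is forward invariant (and asymptotically stable). By \eqref{eqn:Szset}--\eqref{eqn:Szsetshort}, the single constraint $p(x,u)^\top v \geq d(x,u,t)$ of the QP \eqref{eqn:CBFQPv} is exactly the CBF condition written for the $v$-augmented dynamics \eqref{eqn:stateeqnv}. Hence it suffices to show three things: that the optimizer $v^*$ is well-defined at every $(x,u,t)$ (feasibility of the QP), that the modified closed loop \eqref{eqn:umodifiedopt} admits solutions, and that along them the CBF condition holds by construction. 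The last point is essentially immediate: by feasibility, $v^* \in K_{z,t}$, so $p^\top v^* \geq d$, which unwinds through \eqref{eqn:pxu}--\eqref{eqn:pdxut} to $\frac{\partial h}{\partial x}f + \frac{\partial h}{\partial u}(\phi+v^*) + \gamma(h) \geq 0$, i.e. \eqref{eqn:zCBFcond}.

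The crux, and the place where the I-CBF hypothesis enters, is feasibility. The feasible set is the half-space $K_{z,t}=\{v : p^\top v \geq d\}$, and I would argue nonemptiness by cases. If $p(x,u)\neq 0$, then $v=\lambda p$ with $\lambda \geq d/\norm{p}^2$ satisfies the constraint, so $K_{z,t}\neq\emptyset$. If $p(x,u)=0$, the constraint collapses to $0 \geq d$; here the defining I-CBF implication \eqref{eqn:relativedegree} guarantees $d(x,u,t)\leq 0$, so $K_{z,t}=\R^m$. Thus $K_{z,t}$ is a nonempty closed convex set for all $(x,u,t)$, and since the cost $\norm{v}^2$ is strictly convex and coercive, the minimizer $v^*$ exists and is unique. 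This is the key point: the I-CBF condition is tailored precisely so that when the auxiliary input loses all authority over the constraint ($p=0$, the ``relative-degree'' degeneracy), the constraint is already satisfied ($d\leq 0$), and feasibility never fails.

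To address existence of solutions I would exhibit the closed-form minimizer as the projection of the origin onto the half-space: $v^*=0$ when $d\leq 0$, and $v^*=(d/\norm{p}^2)\,p$ when $d>0$. The second branch is well-posed because the contrapositive of \eqref{eqn:relativedegree} gives $d>0 \Rightarrow p\neq 0$, so $\norm{p}^2>0$ there. This formula makes $v^*$ continuous on the region where $p\neq 0$ or $d<0$, and in particular continuous across the switching surface $d=0$ whenever $p$ stays bounded away from $0$; combined with continuity of $f$ and $\phi$ this yields solutions $z(t)=(x(t),u(t))$ of \eqref{eqn:umodifiedopt}.

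With feasibility, the constraint--CBF equivalence, and existence of solutions in hand, the proof closes by invoking the invariance statement following \eqref{eqn:zCBFcond}: since \eqref{eqn:zCBFcond} holds along every solution of the modified closed loop, $\S$ is forward invariant, so $(x(0),u(0))\in\S$ implies $(x(t),u(t))\in\S$ for all $t\geq 0$. I expect the main obstacle to be purely the feasibility argument via \eqref{eqn:relativedegree}; the projection computation and the appeal to the already-established invariance result are routine, and the only genuine technical care needed is the mild regularity of $v^*$ at the degenerate locus where $p$ and $d$ vanish simultaneously.
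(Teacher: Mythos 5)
Your proposal is correct and takes essentially the same route as the paper's proof: both reduce safety to verifying the QP constraint $p(x,u)^{\top} v^* \geq d(x,u,t)$ along closed-loop trajectories, both produce the explicit min-norm solution ($v^*=0$ if $d\leq 0$, $v^*=(d/\|p\|^2)p$ if $d>0$) whose well-posedness at $p=0$ is precisely what the I-CBF condition \eqref{eqn:relativedegree} guarantees, and both then invoke the forward-invariance result associated with \eqref{eqn:zCBFcond}. The only cosmetic difference is that you justify the closed form by elementary projection onto a half-space (with feasibility argued by cases) where the paper cites LICQ and the KKT conditions, and you are slightly more explicit about existence of solutions to the modified closed loop---a point the paper compresses into its remark on Lipschitz continuity of $v^*$.
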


\begin{proof}
We only need to verify that Eq. \eqref{eqn:zCBFcond} is satisfied for $z(t) = (x(t),u(t))$ the solution to \eqref{eqn:stateeqnv} with $v = v^*(x,u,t)$.  Per \eqref{eqn:Szset} and \eqref{eqn:Szsetshort}, this is equivalent to: 
\begin{eqnarray}
\label{eqn:proofmaineqn}
p(x,u)^{\top} v^*(x,u,t) \geq d(x,u,t).
\end{eqnarray}
This, in turn, will imply that $(x(t),u(t)) \in \S$ for all $t \geq 0$ if $(x(0),u(0)) \in \S$ per the main result of \cite{Ames19}. 

Thus, to establish the result, we must show the solution to \eqref{eqn:CBFQPv} satisfies \eqref{eqn:proofmaineqn}.  This follows from the fact that we can obtain an explicit solution to \eqref{eqn:CBFQPv}.  In particular, the condition that $h$ is an integral control barrier function and, in particular, that it satisfies \eqref{eqn:relativedegree}, implies that the linear independent constraint qualification condition is satisfied \cite{boyd2004convex}.  Therefore, using the KKT optimality conditions (see \cite{xu2015robustness}), the solution to \eqref{eqn:CBFQPv} is given by the \emph{min-norm} controller: 
\begin{align}
\label{eqn:minnormcon}
v^*(x,u,t) = \left\{ 
\begin{array}{lcr}
\frac{d(x,u,t)}{\| p(x,u) \|^2} p(x,u) & \mathrm{if~} d(x,u,t) > 0 \\
0 & \mathrm{if~} d(x,u,t) \leq 0 
\end{array}
\right. 
\end{align}
This controller is well-defined because, as $h$ is a control barrier function and thus satisfies \eqref{eqn:relativedegree}, it follows that: $d(x,u,t) > 0$ implies that $p(x,u) \neq 0$.  Additionally, from this explicit form, one can verify the Lipschitz continuity of this controller (assuming Lipschitz continuity of $p$ and $d$).  

Finally, the explicit form of \eqref{eqn:minnormcon} makes it clear that it satisfies \eqref{eqn:proofmaineqn}.  If $d(x,u,t) \leq 0$ than it is naturally satisfied with $v^*(x,u,t) = 0$.  If $d(x,u,t) > 0$, then:
\begin{eqnarray}
\quad p(x,u)^{\top} v^*(x,u,t)  & = &   \underbrace{p(x,u)^{\top} p(x,u)}_{\| p(x,u) \|^2} \frac{d(x,u,t)}{\| p(x,u) \|^2} \nonumber\\
& = &  d(x,u,t). \hspace{2.6cm}  \hfill \qedhere \nonumber
\end{eqnarray}
\end{proof}

  \begin{remark}
  A natural consequence of this form of control barrier functions is, as its name suggests, its application to integral control.  In the case of ``pure'' integral control, we can consider a control system \eqref{eqn:stateeqnv} together with $\phi(x,u,t) \equiv 0$, i.e., the dynamic extension via the addition of an integrator.  In this case, the controller from Theorem \ref{thm:main} is just $u(t)~=~\int_{0}^t v^*(x(\tau),u(\tau),\tau) d \tau$.
  \end{remark}

\begin{remark}
The controller given in Theorem \ref{thm:main} can be viewed in the following fashion.  We began with a control system $\dot{x} = f(x,u)$ for which we synthesized a dynamic controller: 
\begin{align*}
\dot{u} = \phi(x,u,t) +  \left\{ 
\begin{array}{lcr}
\frac{d(x,u,t)}{\| p(x,u) \|^2} p(x,u) & \mathrm{if} & d(x,u,t) > 0 \\
0 & \mathrm{if} &  d(x,u,t) \leq 0 
\end{array}
\right. 
\end{align*}
with $\phi(x,u,t)$ the ``feedforward'' integral controller that is modulated by the additional term to ensure safety through a minimal modification.  The controller, $\phi$, for example, can be given by \eqref{eqn:controleqn}.  The advantages of the integral instantiation of control barrier functions are: they allow CBFs to be applied to systems not in control affine form; the dynamic equation describing $u$ is integrated thus smoothing out the non-smooth nature of solutions to QPs; they can encode input bounds.  
\end{remark}

\begin{remark}
\label{rmk:staticfeedback}
 We can also consider the case when we have a nominal controller $u = k(x)$ as in \eqref{eqn:feedbackcont}.  In this case, if we pick the following ``feedforward'' integral controller: 
 $$
 \phi(x,u,t) = \frac{\partial k}{\partial x}(x) f(x,u) + \frac{\alpha}{2}\left(k(x,t)-u\right),
 $$
 for $\alpha > 0$, we provably get that $\| u - k(x) \| \to 0$ exponentially.  This can, therefore, be coupled with Theorem \ref{thm:main} to achieve provable tracking of the desired controller subject to safety.  Proving this is not within the scope of this paper, but will be the subject of future work---as it indicates the ability to safely track desired controllers even for non-affine control systems. 
\end{remark}

\section{Applications and Extensions of I-CBFs}

In this section, we explore some extensions and applications of integral control barrier functions.  In particular, we demonstrate that I-CBFs can encode both input bounds and state constraints.  Importantly, we show that I-CBFs can simultaneously enforce both, thereby enabling input bounded provably safe integral control subject to feasibility. 

\subsection{Application to Input Constrained Systems}
Consider the case when we have a system $\dot{x} = f(x,u)$ subject to input constraints: for $u_{\max} > 0$, 
$$
\| u \|^2 \leq u_{\max} \qquad \Rightarrow \qquad h_u(u) := u_{\max} - u^{\top} u \geq 0. 
$$
Naturally, this is a conservative method for enforcing input constraints due to its scalar instantiation.  Yet, it allows us to view input bounds as safety relative to the set $\S_u := \{ (x,u) \in \R^n \times \R^m ~ : ~ h_u(u) \geq 0\}$.
In this case, \eqref{eqn:pxu} and \eqref{eqn:pdxut} become: 
 \begin{eqnarray}
 \label{eqn:pxu2}
      p_u(x,u)  &  =  &  - 2 u \nonumber\\
      \label{eqn:pdxut2}
     d_u(x,u,t)  &  =  &   2 u^{\top} \phi(x ,u ,t) - \gamma_u(h_u(u )). \nonumber
 \end{eqnarray}
Thus, following from Theorem \ref{thm:main}: 

\begin{lemma}
\label{lem:torquebound}
For the system \eqref{eqn:stateeqnv}, $h_u$ is an integral control barrier function, i.e., $\S_u$ can be rendered safe (forward invariant), with the integral control law \eqref{eqn:umodifiedopt} where: 
 \begin{align}
\label{eqn:CBFQPvubound}
  v^*(x,u,t)  =  \underset{v \in \R^{m}}{\operatorname{argmin}} & ~ ||v||^2 \\
\mathrm{s.t.} &  ~ 2 u^{\top} v \geq - 2 u^{\top} \phi(x ,u ,t) - \gamma_u(h_u(u ))
\nonumber
\end{align}
\end{lemma}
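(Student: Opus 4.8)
The plan is to verify directly that $h_u$ satisfies the defining implication \eqref{eqn:relativedegree} of an integral control barrier function, after which forward invariance of $\Su$ follows immediately by invoking Theorem~\ref{thm:main}. First I would record the specialization of \eqref{eqn:pxu} and \eqref{eqn:pdxut} to $h_u(u) = u_{\max} - u^\top u$: since $h_u$ depends only on $u$, the gradient in $u$ gives $p_u(x,u) = -2u$, and since $\partial h_u / \partial x = 0$ the scalar collapses to $d_u(x,u,t) = 2u^\top \phi(x,u,t) - \gamma_u(h_u(u))$.

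The crux is the implication $p_u(x,u) = 0 \Rightarrow d_u(x,u,t) \leq 0$. I would note that $p_u(x,u) = -2u$ vanishes precisely when $u = 0$, a single point lying in the strict interior of $\Su$ because $h_u(0) = u_{\max} > 0$. Evaluating $d_u$ there, the term $2u^\top\phi$ drops out, leaving $d_u(x,0,t) = -\gamma_u(u_{\max})$. Since $\gamma_u$ is an extended class-$\mathcal{K}$ function, it is strictly increasing with $\gamma_u(0)=0$, so $u_{\max}>0$ forces $\gamma_u(u_{\max}) > 0$ and hence $d_u(x,0,t) < 0 \leq 0$. This confirms that $h_u$ is an I-CBF.

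Before invoking the theorem I would also check the standing regularity hypothesis: as a function on $\R^n\times\R^m$, the full gradient of $h_u$ is $(0,-2u)^\top$, which is nonzero on the boundary $\{h_u = 0\}$ since there $u^\top u = u_{\max} \neq 0$, so $0$ is a regular value. With both the regularity and I-CBF conditions in hand, Theorem~\ref{thm:main} applies verbatim: the min-norm controller $v^*$ of \eqref{eqn:CBFQPvubound} is well-defined (its constraint normal $p_u$ is nonzero whenever $d_u > 0$), and the modified dynamics \eqref{eqn:umodifiedopt} render $\Su$ forward invariant.

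I do not anticipate a genuine obstacle, since the verification reduces to evaluating $d_u$ at the lone zero of $p_u$. The only point worth flagging is that the argument rests essentially on $u_{\max} > 0$: this is what places the unique zero $u = 0$ of $p_u$ strictly inside $\Su$, and the sign-definiteness of $\gamma_u$ on the positive reals is what converts that geometric fact into the required inequality $d_u \leq 0$.
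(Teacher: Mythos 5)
Your proposal is correct and follows essentially the same route as the paper: verify the I-CBF implication \eqref{eqn:relativedegree} by observing that $p_u$ vanishes only at $u=0$, where $d_u = -\gamma_u(u_{\max}) \leq 0$ since $u_{\max} > 0$, and then invoke Theorem~\ref{thm:main}. Your additional check that $0$ is a regular value of $h_u$ is a sensible bit of extra care the paper omits, but it does not change the argument.
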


\begin{proof}
We only need to verify \eqref{eqn:relativedegree}.  If $p_u(x,u)= 2u = 0$, then $u = 0$, and therefore $d_u(x,u,t) = - \gamma_u(h_u(u))$.  But $u = 0$ implies that $(x,u) \in \S_u$, and therefore $h_u(u) \geq 0$ or $\gamma_u(h_u(u)) \geq 0$, and so $d_u(x,u,t) \leq 0$ as desired. 
\end{proof}

\subsection{Application to State Constrained Control Affine Systems}
\label{ssec:affine}

Returning to the systems that are often considered in the context of control barrier functions---affine control systems---we wish to understand the relationship between classical and integral control barrier functions for systems of this form.
Consider the variation of \eqref{eqn:stateeqnv} in control affine form:
\begin{eqnarray}
\label{eqn:stateeqnaffine}
\dot{x} & = & f_0(x) + f_1(x) (\mu +  u) \\
\dot{u} & = & \phi(x,u,t) \nonumber
\end{eqnarray}
where here we added an auxiliary control law in the $x$ dynamics---the reason for this will become apparent soon. 

For this system suppose we have a safe set defined in terms of state (and not input), i.e., by $h_x : \R^n \to \R$ with: 
$$
\Sx := \{ (x,u) \in \R^n \times \R^m ~ : ~ h_x(x) \geq 0\}.
$$
Additionally, suppose that $h_x$ is a valid control barrier functions for the affine dynamics without the corresponding integrator: $\dot{h}_x(x,u) \geq -\gamma_x(h_x(x))$.
This can be quantified by a conditions analogous to \eqref{eqn:relativedegree}, specifically:
\begin{align}
\label{eqn:hxCBFcond}
\underbrace{\frac{\partial h_x}{\partial x}(x) f_1(x)}_{:= p_x(x)^{\top}} = 0
\Rightarrow
\underbrace{
\frac{\partial h_x}{\partial x}(x) f_0(x)  +  \gamma_x(h_x(x))   }_{ := -d_x(x)}  \geq 0 
\end{align}
The following lemma gives a controller that will guarantee the forward invariance of $\Sx$ under the assumption that $h_x$ is a valid CBF. It will do so through a combined ``traditional'' CBF controller together with an integral controller. 

\begin{lemma}
\label{lem:statebound}
Given the system \eqref{eqn:stateeqnaffine} with $h_x : \R^n \to \R$ a control barrier function for $\dot{x} = f_0(x) + f_1(x) u$.  Then the controller: 
$$
k_{\mathrm{s}}(x,u,t) = \mu^*(x(t),u(t)) + \int_{0}^{t} \phi(x(\tau),u(\tau),\tau) d \tau
$$
where, for any nominal controller $u = k(x)$: 
\begin{align}
\label{eqn:CBFQPint}
\mu^*(x,u) = \underset{\mu \in \R^{m}}{\operatorname{argmin}}&   ~ ||\mu +u -k(x)||^2 \\
\mathrm{s.t.}  ~ &  ~
p_x(x)^{\top} \mu  \geq d_x(x) - p_x(x)^{\top} u \nonumber
\end{align}
renders the set $\Sx$ forward invariant, i.e., safe.  
\end{lemma}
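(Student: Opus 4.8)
The plan is to reduce the claim to the standard CBF forward-invariance result by showing that the single constraint of the QP \eqref{eqn:CBFQPint} is \emph{exactly} the CBF condition $\dot{h}_x + \gamma_x(h_x) \geq 0$ for the closed-loop $x$-dynamics, where the crucial point is that the input actually driving the state is not $\mu^*$ alone but the composite $\mu^* + u$. Integrating $\dot{u} = \phi$ gives $u(t) = u_0 + \int_0^t \phi\,d\tau$, so (identifying the integral term with the accumulated integral state, i.e. $u_0 = 0$) the total signal $k_{\mathrm{s}} = \mu^* + \int_0^t \phi\,d\tau = \mu^* + u$ is precisely the input appearing in $\dot{x} = f_0(x) + f_1(x)(\mu + u)$. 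Thus the accumulated integral control $u$ must be carried along as a known, measured quantity that itself contributes to $\dot{h}_x$, and the job of $\mu^*$ is to compensate for whatever value $u$ has drifted to.

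First I would differentiate $h_x$ along a closed-loop trajectory. Since $h_x$ depends only on $x$,
$$\dot{h}_x = \frac{\partial h_x}{\partial x}(x)\big(f_0(x) + f_1(x)(\mu^* + u)\big) = \frac{\partial h_x}{\partial x}(x) f_0(x) + p_x(x)^{\top}(\mu^* + u).$$
Using the definition $-d_x(x) = \frac{\partial h_x}{\partial x}(x) f_0(x) + \gamma_x(h_x(x))$ from \eqref{eqn:hxCBFcond}, the inequality $\dot{h}_x + \gamma_x(h_x) \geq 0$ rearranges to $p_x(x)^{\top}\mu^* \geq d_x(x) - p_x(x)^{\top} u$, which is verbatim the constraint of \eqref{eqn:CBFQPint}. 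Hence any feasible $\mu^*$ automatically enforces the CBF condition, and the offset $-p_x(x)^{\top} u$ on the right-hand side is exactly the term that absorbs the integral control's contribution to $\dot{h}_x$.

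Next I would establish feasibility of the QP for every $(x,u)$, so that $\mu^*$ exists and satisfies its constraint; this step mirrors the relative-degree check \eqref{eqn:relativedegree} used for I-CBFs. When $p_x(x) \neq 0$ the constraint is a single linear inequality with nonzero gradient and is trivially satisfiable. When $p_x(x) = 0$, the hypothesis that $h_x$ is a valid CBF, i.e. \eqref{eqn:hxCBFcond}, forces $d_x(x) \leq 0$, so the constraint collapses to $0 \geq d_x(x)$ and holds for \emph{every} $\mu$ (in particular for the unconstrained minimizer $\mu = k(x) - u$). Feasibility is therefore global and the minimizer $\mu^*$ satisfies the constraint everywhere.

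Having shown $\dot{h}_x + \gamma_x(h_x) \geq 0$ along every closed-loop trajectory, I would invoke the standard comparison-lemma forward-invariance argument of \cite{Ames19} (already used in the proof of Theorem \ref{thm:main}) to conclude that $h_x(x(0)) \geq 0$ implies $h_x(x(t)) \geq 0$ for all $t \geq 0$, i.e. $\S_x$ is forward invariant. The main conceptual obstacle, and the step most easily mishandled, is the bookkeeping of the two control channels: one must recognize that safety is enforced on the composite input $\mu^* + u$, that the integral term in $k_{\mathrm{s}}$ \emph{is} the state $u$, and that the $-p_x(x)^{\top} u$ offset in the constraint is not incidental but is precisely what makes $\mu^*$ cancel the drifting integral state inside the CBF inequality. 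A secondary point I would merely note (rather than prove in detail) is well-posedness of the closed loop, which follows from the Lipschitz continuity of the explicit min-norm form of $\mu^*$ exactly as in Theorem \ref{thm:main}.
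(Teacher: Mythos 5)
Your proposal is correct and follows essentially the same route as the paper's proof: you establish feasibility of the QP \eqref{eqn:CBFQPint} by noting that when $p_x(x)=0$ the offending term $-p_x(x)^{\top}u$ vanishes so \eqref{eqn:hxCBFcond} forces $d_x(x)\leq 0$, and you then conclude forward invariance of $\Sx$ from the classic CBF result. The only difference is that you spell out explicitly the computation showing the QP constraint is the rearranged CBF condition $\dot{h}_x+\gamma_x(h_x(x))\geq 0$ for the composite input $\mu^*+u$, a step the paper leaves implicit since the constraint was constructed that way.
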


\begin{proof}
By \eqref{eqn:hxCBFcond}, the QP \eqref{eqn:CBFQPint} is well-defined and has a solution.  This follows from the fact that $u$ not being a decision variable anymore does not affect \eqref{eqn:hxCBFcond}, i.e., $p_x(x) = 0$ still implies that $d_x(x) \leq 0$ since the term $-p_x(x) u$ vanishes.  Thus safety, i.e., forward invariance of $\Sx$, is guarenteed by the classic CBF result \cite{Ames17}. 
\end{proof}

\subsection{Extension to Multiple Control Barrier Functions}

With the goal of now simultaneously enforcing state and input constraints in a holistic fashion,
we will begin by taking inspiration from Section \ref{ssec:affine}, wherein we must achieve similar results for systems with only integral control. 
Returning to original state equations \eqref{eqn:stateeqnv}, but in affine form:
\begin{eqnarray}
\label{eqn:stateeqnaffinetwop}
\dot{x} & = & f_0(x) + f_1(x) u \\
\dot{u} & = & \phi(x,u,t) + v,  \nonumber
\end{eqnarray}
consider state constraints encoded by $\Sx$, i.e., by $h_x(x) \geq 0$. The input $v$ no longer appears in $\dot{h}_x$, but, the forward invariance of the set $\Sx$ is quantified by the condition $\dot{h}_x(x,u)  + \gamma_x(h_x(x)) \geq 0 $. Thus, in order to ensure the latter inequality is satisfied, we follow the approach in \cite{nguyen2016exponential} and let:
 \begin{eqnarray}
 \label{eqn:heeqn}
\he(x,u)   :=  \dot{h}_x(x,u)  + \gamma_x(h_x(x)) =  p_x(x)^{\top} u - d_x(x). 
\end{eqnarray}
The result is a function: $\he : \R^n \times \R^m \to \R$ with corresponding set: $\Se = \{ (x,u) \in \R^n \times \R^m ~ : ~ \he(x,u) \geq 0\}$.   
Let $\pe(x,u)$ and $\de(x,u,t)$ be the corresponding functions defined from $\he$, for some $\gamma_e$, as in \eqref{eqn:pxu} and \eqref{eqn:pdxut}. 

\begin{lemma}
\label{lem:cbftoicbf}
Given the system \eqref{eqn:stateeqnaffinetwop} with $h_x : \R^n \to \R$ a control barrier function with $p_x(x) \neq 0$, i.e., $h_x$ has relative degree 1, then $h_e(x,u) = \dot{h}_x(x,u)  + \gamma_x(h(x))$ is an integral control barrier function.  Additionally, for the I-CBF control law \eqref{eqn:CBFQPv}, using $\pe$ and $\de$ in place of $p$ and $d$, if:
$$
\begin{array}{r}
x_0 \in \Sx  \\
(x_0,u_0) \in \Se
\end{array}
\quad \Rightarrow \quad
\begin{array}{rr}
x(t) \in \Sx  & \quad \forall~ t \geq 0 \\
(x(t),u(t)) \in \Se &  \quad \forall~ t \geq 0
\end{array}
$$
That is, $\Sx$ is safe subject to appropriate initial conditions.
\end{lemma}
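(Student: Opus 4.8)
The plan is to decompose the statement into two essentially independent claims and dispatch them in order: first, that $\he$ satisfies the defining I-CBF condition \eqref{eqn:relativedegree}, and second, that the induced closed loop renders both $\Se$ and $\Sx$ forward invariant. I expect the first claim to follow almost immediately from the relative-degree-one hypothesis, so that Theorem \ref{thm:main} becomes directly applicable; the second is a cascade argument that chains Theorem \ref{thm:main} with the classical CBF forward-invariance result.

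For the I-CBF condition, I would first compute $\pe$ explicitly. From \eqref{eqn:heeqn} we have $\he(x,u) = p_x(x)^{\top} u - d_x(x)$, which depends on $u$ only through the linear term $p_x(x)^{\top} u$. Differentiating and using \eqref{eqn:pxu} gives $\frac{\partial \he}{\partial u}(x,u) = p_x(x)^{\top}$, so that $\pe(x,u) = p_x(x)$. The relative-degree-one hypothesis $p_x(x) \neq 0$ then forces $\pe(x,u) \neq 0$ for every $(x,u)$, so the premise $\pe(x,u) = 0$ of the implication \eqref{eqn:relativedegree} is never met and the implication holds vacuously. Hence $\he$ is an I-CBF.

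For forward invariance, I would apply Theorem \ref{thm:main} with $h$ replaced by $\he$: since $\he$ is an I-CBF, the control law \eqref{eqn:CBFQPv} built from $\pe$ and $\de$ renders $\Se$ forward invariant, so $(x_0,u_0) \in \Se$ yields $(x(t),u(t)) \in \Se$, i.e. $\he(x(t),u(t)) \geq 0$, for all $t \geq 0$---this is the second conclusion. The key remaining step is to recognize that $\he(x(t),u(t)) \geq 0$ is exactly the CBF condition for $h_x$ along the trajectory: because $\dot{x} = f_0(x) + f_1(x) u$, one has $\frac{d}{dt} h_x(x(t)) = \dot{h}_x(x(t),u(t))$, and therefore $\he(x(t),u(t)) = \frac{d}{dt} h_x(x(t)) + \gamma_x(h_x(x(t)))$. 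Invoking the classical comparison-lemma/Nagumo argument \cite{Ames17}, this pointwise differential inequality together with $h_x(x_0) \geq 0$ (i.e. $x_0 \in \Sx$) yields $h_x(x(t)) \geq 0$, i.e. $x(t) \in \Sx$, for all $t \geq 0$, giving the first conclusion.

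The main obstacle, and the only genuinely new content beyond bookkeeping, is this cascade structure: forward invariance of the lifted set $\Se$ must be translated into forward invariance of the state-only set $\Sx$, which hinges on observing that membership in $\Se$ along the trajectory encodes precisely the differential inequality that the standard CBF argument integrates into invariance of $\Sx$. A secondary technical point I would verify is well-posedness: Theorem \ref{thm:main} guarantees invariance of $\Se$ only along an actual solution of \eqref{eqn:stateeqnaffinetwop}, so I would note---as in the proof of Theorem \ref{thm:main}, and here trivially because $\pe = p_x$ never vanishes---that the min-norm $v^*$ is well-defined and Lipschitz, so the closed-loop trajectory exists and both invariance conclusions hold simultaneously along it.
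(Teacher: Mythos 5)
Your proposal is correct and follows essentially the same route as the paper's proof: verify $\pe(x,u) = p_x(x)$ so that the relative-degree hypothesis makes the I-CBF condition \eqref{eqn:relativedegree} hold trivially, invoke Theorem \ref{thm:main} to obtain forward invariance of $\Se$, and then observe that membership in $\Se$ along the trajectory is precisely the differential inequality $\frac{d}{dt}h_x + \gamma_x(h_x) \geq 0$, which together with $x_0 \in \Sx$ gives invariance of $\Sx$ by the classical CBF comparison argument. Your write-up is in fact somewhat more careful than the paper's (explicitly computing $\pe$, noting the vacuous implication, and checking well-posedness of the closed loop), but the decomposition and key steps are identical.
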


\begin{proof}
It is easy to verify that $p_e(x,u) = p_x(x)$, thus the relative degree condition transfers from $h$ to $\he$.  As a result, the controller in \eqref{eqn:CBFQPv} is well-defined, i.e., \eqref{eqn:relativedegree} is trivially satisfied.  As a result, $\Se$ is forward invariant, i.e., assuming an initial condition $(x_0,u_0) \in \Se$ then $\dot{h}_e(x(t),u(t)) = \dot{h}_x(x(t),u(t)) + \gamma(h_x(x(t))) \geq 0$.  Coupling this with the assumption that $x_0 \in \Sx$, i.e, that $h_x(x_0) \geq 0$, it follows that $\Sx$ is also forward invariant.
\end{proof}

\newsec{Combining State Constraints with Input Bounds.} With the I-CBF $h_e$, obtained from $h_x$ via Lemma \ref{lem:cbftoicbf}, we can synthesize controllers that 
enforce state constraints with input bounds.

\begin{theorem}
\label{thm:maintwo}
Consider the system in \eqref{eqn:stateeqnaffinetwop}, together with corresponding CBF $h_x : \R^n \to \R$ and I-CBF $h_u : \R^m \to \R$, and safe sets $\Sx$ and $\Su$ respectively, and let $h_e : \R^n \times \R^m$ be given as in \eqref{eqn:heeqn}.   If the following QP is feasible: 
\begin{align}
\label{eqn:CBFQPintfinal}
v^*(x,u,t) 
= &\underset{v\in \R^{m}}{\operatorname{argmin}}   ~  ||v||^2 \\
& \quad \mathrm{s.t.}  \quad 
\begin{bmatrix}
p_x(x)^{\top}  \\
p_u(x,u)^{\top}
\end{bmatrix}
v \geq 
\begin{bmatrix}
d_e(x,u,t) \\
d_u(x,u,t) 
\end{bmatrix}  
\nonumber
\end{align}
then the dynamically defined (integral) controller
$$\dot{u}~=~\phi(x,u,t) + v^*(x,u,t)$$
renders the set $\S = \Sx \cap \Su$ forward invariant, i.e., safe, for appropriate initial conditions: 
$$
\begin{array}{r}
x_0 \in \Sx  \\
(x_0,u_0) \in \Se \cap \Su
\end{array}
\quad \Rightarrow \quad
\begin{array}{r}
(x(t),u(t)) \in \Sx \cap \Su \\
\forall ~ t \geq 0. 
\end{array}
$$
\end{theorem}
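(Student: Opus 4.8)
The plan is to read the two rows of the QP constraint in \eqref{eqn:CBFQPintfinal} as two independent I-CBF conditions---one for $\he$ and one for $h_u$---and then to stitch together the forward-invariance conclusions already available from Lemmas \ref{lem:torquebound} and \ref{lem:cbftoicbf}. By Lemma \ref{lem:cbftoicbf} we have $\pe(x,u) = p_x(x)$, so the first row $p_x(x)^\top v \geq \de(x,u,t)$ is exactly the I-CBF membership condition $\pe(x,u)^\top v \geq \de(x,u,t)$ for $\he$; by Lemma \ref{lem:torquebound}, $h_u$ is an I-CBF and the second row $p_u(x,u)^\top v \geq d_u(x,u,t)$ is its I-CBF condition. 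Any feasible point---and in particular the minimizer $v^*$---satisfies both simultaneously.

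First I would establish forward invariance of $\Se$ and then of $\Sx$. Since $v^*$ satisfies the first constraint, the CBF condition \eqref{eqn:zCBFcond} holds for $\he$ along the closed-loop trajectory of \eqref{eqn:stateeqnaffinetwop} with $v = v^*$; hence, starting from $(x_0,u_0)\in\Se$, the set $\Se$ is forward invariant. Exactly as in the proof of Lemma \ref{lem:cbftoicbf}, $\he(x(t),u(t)) \geq 0$ unpacks to $\dot{h}_x(x(t),u(t)) + \gamma_x(h_x(x(t))) \geq 0$, which is the classical CBF condition for $h_x$; combined with $x_0 \in \Sx$ this yields $x(t)\in\Sx$ for all $t\geq 0$ by \cite{Ames17}. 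In parallel, since $v^*$ satisfies the second constraint, \eqref{eqn:zCBFcond} holds for $h_u$, so from $(x_0,u_0)\in\Su$ the set $\Su$ is forward invariant and $(x(t),u(t))\in\Su$ for all $t$. Intersecting the two conclusions gives $(x(t),u(t))\in\Sx\cap\Su$ for all $t\geq 0$, as claimed.

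The main obstacle is feasibility, not invariance. Each constraint is individually satisfiable thanks to the relative-degree conditions ($\pe = p_x \neq 0$ by the relative-degree-one hypothesis, and $d_u \leq 0$ whenever $p_u = 0$ by the proof of Lemma \ref{lem:torquebound}), but the two half-spaces $\{p_x^\top v \geq \de\}$ and $\{p_u^\top v \geq d_u\}$ can fail to intersect when $p_x$ and $p_u = -2u$ are nearly anti-parallel with large offsets---precisely the infeasibility phenomenon flagged in the introduction. This is why feasibility enters as a hypothesis and the proof makes no attempt to certify it. A secondary technical point worth noting is that, with two active constraints, $v^*$ is no longer given by the closed-form min-norm expression \eqref{eqn:minnormcon} of Theorem \ref{thm:main}, so Lipschitz continuity of $v^*$---and thus well-posedness of the closed-loop ODE---would need to be argued (or assumed) separately.
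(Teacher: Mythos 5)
Your proof is correct and follows essentially the same route as the paper's: both read the two rows of the QP constraint as the CBF conditions for $\he$ and $h_u$, conclude forward invariance of $\Se$ and $\Su$ from feasibility of the QP (via Theorem \ref{thm:main} and Lemma \ref{lem:torquebound}), and then invoke the argument of Lemma \ref{lem:cbftoicbf} to transfer invariance of $\Se$, together with $x_0 \in \Sx$, into invariance of $\Sx$. Your closing observations on feasibility and on the Lipschitz continuity of $v^*$ mirror the paper's own remark following the theorem, so nothing essential is missing or divergent.
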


\begin{proof}
The constraints in \eqref{eqn:CBFQPintfinal} are simply reformulations of: 
\begin{eqnarray}
\hspace{-0.1cm}
\begin{array}{l}
 \dot{h}_e(x,u, v)  \geq  - \gamma_e(h_e(x,u))  \\  
\dot{h}_u(x,u, v)  \geq   - \gamma_u(h_u(u))
\end{array}
\Leftrightarrow 
\begin{array}{r}
 p_x(x)^{\top} v   \geq   d_e(x,u,t)  \\  
p_u(x,u)^{\top} v  \geq  d_u(x,u,t) 
\end{array}   \nonumber
\end{eqnarray}
Therefore, assuming a feasible QP implies that $\Se \cap \Su$ is forward invariant by Theorem \ref{thm:main} and Lemma \ref{lem:torquebound}.  The result follows by Lemma \ref{lem:cbftoicbf}. 
\end{proof}

\begin{remark}
The requirement on the feasibility of the QP in Theorem \ref{thm:maintwo} encodes the fact that we are not assuming that $\Sx$ is a control invariant set.  Without this assumption, we do not know if there are feasible control inputs rendering $\Sx$ invariant---this is encoded by the feasibility of the QP. Moreover, if the QP is feasible, conditions for the Lipschitz continuity of its solution can be found (see, e.g., \cite{Ames17}). This and the relaxation of the feasibility assumption will be the subject of future work.
\end{remark}

\section{Simulation Results}

To demonstrate the results of the paper, we will return to an early motivating example considered for CBFs \cite{Ames14}: \emph{adaptive cruise control (ACC).}
The dynamics are given by: 
\begin{eqnarray}
\label{eqn:accdynamics}
\dot{x} = 
\begin{bmatrix}
x_2 \\
- \frac{1}{m} F_r(x) \\
v_0 - x_2 
\end{bmatrix}
+ 
\begin{bmatrix}
0 \\
\frac{1}{m} \\
0
\end{bmatrix} u
\end{eqnarray}
where $(x_1,x_2)$ are the position and velocity ($x_2 = \dot{x}_1$) of the vehicle, $m$ its mass, $x_3$ is the distance between the vehicle and the lead vehicle traveling at a velocity of $v_0$, and $F_r(x) = c_0 + c_1 x_2 + c_2 x_2^2$ is the empirical form of rolling resistance. The control objective for the system is to drive the car to a desired speed ($x_2 \to v_d$).  This can be represented as an output: $y = \zeta(x) = x_2 - v_d$.  To obtain the predictor as in \eqref{eqn:predictor}, we can forward integrate \eqref{eqn:accdynamics} with $c_2 = 0$, i.e., the linear approximation, resulting in: 
\[
\hat{y}(t + T) =  
{\scriptstyle      
-c_{1}^{-1}\left(c_{0}-u(t)+ m v_d-c_{1} e^{-\frac{c_{1} T }{m}} \left(x_2(t) +\frac{c_{0}-u(t)+m v_d}{c_{1}}\right)\right),
}
\]
yielding the dynamically defined control law per \eqref{eqn:controleqn}:
\[
\dot{u}(t) = \alpha c_1 \left( e^{\frac{-c_1}{m} T} - 1 \right)^{-1} \hat{y}(t + T)  =: \phi(x,u,t). 
\]
The state safety constraints encode the ``half the speedometer rule'' which yields the CBF (since $p_x(x) = -1.8/m \neq 0$):
\[
h_x(x) = x_3 - 1.8 x_2 \geq 0 \qquad \mapsto \qquad 
\Sx = \{h_x(x) \geq 0\} .
\]
The input constraint is given by the constraint that the wheel force is bounded by $|u| \leq m c_{a/d} g $ yielding an I-CBF:
\[
h_u(x) =  (m c_{a/d} g)^2 -  u^2 \geq 0 
\quad \mapsto \quad 
\Su = \{h_u(u) \geq 0\} .
\]
\noindent where $c_{a/d}$ is the factor of $g$ for acceleration/deceleration.  In both cases, we pick $\gamma_x(r) = \gamma_u(r) = \gamma r$ for $\gamma > 0$.  From $h_x$ we get $h_e$ as in \eqref{eqn:heeqn}, and we pick $\gamma_e(r) = \frac{1}{2} \gamma r$. 
\twocolumn[%
{\centering\includegraphics[width=\textwidth]{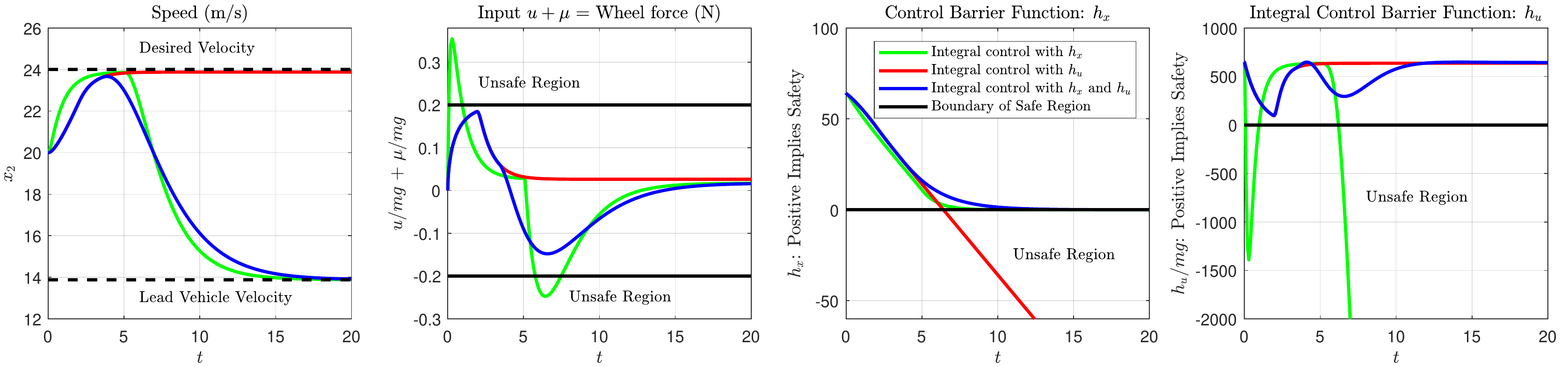}
\captionof{figure}{Methods developed in the paper applied to adaptive cruise control: the goal is for a vehicle to reach a desired velocity ($v_d = 24 m/s$) while not colliding with the lead vehicle traveling slower (at $14 m/s$) and not exceeding a maximum wheel force (input bounds).}
\label{fig:finalfig}}
\vspace{0.5cm}
]
Utilizing the constructions leading to Theorem \ref{thm:maintwo} results in the forward invariance of $\Sx \cap \Su$.  This is illustrated in Fig. \ref{fig:finalfig}, where the parameters and initial condition where chosen to match \cite{Ames14} with $\alpha = 10$ and $\gamma = 1$ above.
Three different controllers are plotted in Fig. \ref{fig:finalfig}, including: only input constraints and thus an I-CBF, $h_u$, with the controller from Lemma \ref{lem:torquebound} (red), only state constraints and thus a CBF, $h_x$, with the controller from Lemma \ref{lem:statebound} (green), and both input and state constraints, thus enforcing both $h_u$ and $h_e$, via the controller from Theorem \ref{thm:maintwo} (blue).  Therefore, via the presented unified controller, we are able to simultaneously satisfy the input and state constraints, and thus render the system safe for both constraints, i.e., render $\Sx \cap \Su$ forward invariant.  Importantly, when compared against past uses of this example in \cite{Ames19,Ames14} we can do so holistically.

\section{Conclusions}

This paper introduced integral control barrier functions (I-CBFs).  By considering dynamically defined controllers, we were able to guarantee safety using I-CBFs defined in terms of both state and input.  
This was applied in the context of dynamically defined tracking controllers for general nonlinear control systems (not necessarily control affine), wherein I-CBFs lead to minimal modification of these controllers via I-CBF based QPs (Theorem \ref{thm:main}).  
We then considered the specific cases of input bounds (Lemma \ref{lem:torquebound}) and state constraints (Lemma \ref{lem:cbftoicbf}), wherein both can be independently enforced via I-CBFs.  
Additionally, we proved the state constraints and input bounds can be simultaneously satisfied via the framework of I-CBFs assuming a feasible QP (Theorem \ref{thm:maintwo}).  This gives a holistic method for provably enforcing both through the use of safety-critical integral controllers.

Future work will be devoted to expanding this theoretic basis and demonstrating this theory experimentally.  From a theoretic perspective, understanding when the QP in Theorem \ref{thm:maintwo} is feasible is a rich problem that has important implications.  
Additionally, we wish to improve the means in which nominal controllers can be enforced in conjunction with integral control barrier functions.  
From an experimentally perspective, the ability of I-CBFs to bound inputs while achieving safety in state has important ramifications that we wish to explore on everything from walking robots to multi-robot systems to safey-critical autonomy.

\bibliographystyle{IEEEtran}
\bibliography{bib/Paper,bib/barrier}

\end{document}